\documentclass[conference]{IEEEtran}
\IEEEoverridecommandlockouts
\usepackage[top=1in,left=0.75in,right=0.75in,bottom=0.75in]{geometry}
\usepackage{cite}
\usepackage{amsmath,amssymb,amsfonts,amsthm}
\usepackage{graphicx}
\usepackage{xcolor}
\usepackage{url}
\usepackage{hyperref}

\newtheorem{definition}{Definition}
\newtheorem{proposition}{Proposition}
\newtheorem{theorem}{Theorem}
\newtheorem{lemma}{Lemma}
\newtheorem{remark}{Remark}

\begin{document}
\title{Higher-Order Mean-Field \\
Control Barrier Functions
\thanks{The authors contributed equally.}}
\author{\IEEEauthorblockN{Levon Nurbekyan}
\IEEEauthorblockA{\textit{Department of Mathematics}\\
\textit{Emory University}\\
Atlanta, USA\\
lnurbek@emory.edu}
\and
\IEEEauthorblockN{Samy Wu Fung}
\IEEEauthorblockA{\textit{Department of Applied Mathematics and Statistics}\\
\textit{Colorado School of Mines}\\
Golden, USA\\
swufung@mines.edu}}
\maketitle

\begin{abstract}
Mean-field control barrier functions (MF-CBFs) enforce swarm safety through constraints on the agents' distribution. Previous formulations, including applications to stochastic coverage and shepherding, use first-order differential inequalities. These cannot directly enforce constraints whose barrier functions yield control-free first derivatives. Thus, we develop the theory of higher-order MF-CBFs, which ensures the positivity of the barrier functionals through higher-order differential inequalities, as in their finite-dimensional analogs. More specifically, we introduce the notion of relative degree for mean-field safety functionals and develop an analytic framework for computing their higher-order derivatives. Moreover, for practical examples of cross-correlation and self-correlation functionals, we show that repeated differentiation along mean-field dynamics is structure-preserving and reduces to kernel recursions. As an illustration, we apply higher-order MF-CBFs to double-integrator swarm tracking and avoidance problems, where position-only constraints lead to MF-CBFs of relative degree two.
\end{abstract}

\begin{IEEEkeywords}
Control barrier functions, mean-field control, higher-order constraints, double integrator, swarm control.
\end{IEEEkeywords}

\section{Introduction}\label{sec:introduction}

Control barrier functions (CBFs) provide an effective method for ensuring safety in control systems~\cite{ames2016control,ames2019control}. The fundamental idea is to introduce a barrier function whose nonnegativity encodes safety and preserve it along the dynamics. A differential inequality on this function defines a set of safe controls at each state and time. The controller then checks the nominal control and, if it is unsafe, projects it onto the safe set.

The CBF approach accommodates stabilizing, optimal, idle, or other nominal controls, although their global behavior after filtering must be analyzed on a case-by-case basis. Beyond safety and flexibility, CBFs also localize the projection in space and time; the correction only requires the current state and time, without solving any global-in-time or global-in-space optimization problem. For control-affine systems with quadratic projection objectives and affine input constraints, this amounts to a quadratic program~\cite{ames2016control}.

For large swarms of identical agents, the objectives often concern aggregate behavior, such as coverage, tracking, or occupying prescribed regions~\cite{sinigaglia2022density,elamvazhuthi2023density,elamvazhuthi2024denoising,lasry2007mean,ruthotto2020machine,lin2021alternating,Liu2021splitting,vidal2024kernel,maffettone2025leader-follower,niu2026sparse}. In such settings, it is natural to treat the swarm as a whole and model it as a probability distribution in state space. Its control and dynamics are then considered in the space of probability measures. Borrowing terminology from statistical physics, these are called mean-field control problems~\cite{lasry2007mean,fornasier2014mean}.

The MF-CBF framework was introduced in~\cite{FungNurbekyan2025mfcbf} and extended to stochastic agents for coverage and shepherding in~\cite{tomaselli2026mfcbf}. These works use first-order differential inequalities to preserve the nonnegativity of a safety functional. However, control may not appear in its first derivative along the dynamics. For example, acceleration first affects a position constraint through the second derivative. As in finite-dimensional CBF theory, such constraints require higher-order differential conditions~\cite{krstic2006nonovershooting,nguyen2016expCBF,xiao2022hoCBF,xiao2019hoCBF}.

In this work, we introduce relative degree for mean-field safety functionals with respect to the individual-agent dynamics and develop the calculus needed to apply the higher-order CBF mechanism. To the best of our knowledge, higher-order MF-CBFs have not previously been considered. More specifically, our contributions are as follows.
\begin{itemize}
    \item A notion of relative degree and an analytic framework for computing higher derivatives of functionals along mean-field dynamics and obtaining the resulting safety conditions.
    \item Analysis of cross-correlation and self-correlation functionals and proving that they retain their respective structures.
    \item Explicit safety constraints for double integrators with position-only correlation functionals of relative degree two.
    \item Swarm tracking and obstacle avoidance experiments with $N=100$ double-integrator agents, using particle controls and interpolated feedback fields.
\end{itemize}

The rest of the paper is organized as follows. Section~\ref{subsec:related} discusses related work, whereas Section~\ref{sec:preliminaries} reviews the basic CBF mechanism. Section~\ref{sec:derivatives} develops the theory of higher-order MF-CBFs, and Section~\ref{sec:kernels} provides the analysis of correlation barriers. Finally we perform numerical experiments in Section~\ref{sec:experiments} followed by brief conlusions in Section~\ref{sec:conclusions}.

\subsection{Related work}\label{subsec:related}

In~\cite{FungNurbekyan2025mfcbf}, the authors introduced the control barrier framework in the mean-field setting. Similar ideas were developed in~\cite{yi2025constrainedInference} for constrained variational inference, where MF-CBFs were used to adjust the vector field in the gradient flow of the KL divergence so that the probability distributions along the flow satisfy suitable (safety) constraints. In~\cite{gao2026banachcontrolbarrierfunctions}, the authors consider a generalization of MF-CBFs, where the control barrier functionals are extended to control barrier operators, and scalar positivity constraints are replaced by positivity constraints in a Banach space $L^1(\mathbb{R}^d)$ once it is equipped with a suitable partial order (e.g., $f\succeq g$ iff $f(x)\geq g(x)$ for a.e. $x\in \mathbb{R}^d$). This approach allows for pointwise constraints on the density as opposed to mean-field (aggregate) constraints in~\cite{FungNurbekyan2025mfcbf,yi2025constrainedInference} (although one can engineer mean-field constraints that yield pointwise density constraints~\cite{yi2025constrainedInference,Liu2021splitting}). In~\cite{tomaselli2026mfcbf}, the authors extend the MF-CBF framework to systems where individual agent dynamics are stochastic and applied it to coverage and shepherding control problems. Similar ideas are employed in~\cite{niu2026sparse}.

MF-CBFs are control barrier functionals since the state of the system is infinite-dimensional; in our case, the state of the system is the distribution of the agents in the state space. Infinite-dimensional control barriers have been explored before in other contexts as well. In~\cite{kiss2023cbFunctionals}, the authors used control barrier functionals to ensure safety in time delay systems. The manuscript~\cite{koga2022safePDE} employed (higher-order) control barrier functionals for the control of the Stefan PDE. Similar ideas were used in~\cite{ahmadi2017safety} for studying the safety behavior of (uncontrolled) PDE.

Finally, (finite-dimensional) higher-order CBFs were introduced in~\cite{xiao2019hoCBF,nguyen2016expCBF,krstic2006nonovershooting}.

\section{Preliminaries: the basic mechanism underlying CBFs}\label{sec:preliminaries}

The basic mechanism of the CBFs addresses the following question: given a smooth function $h_0(t)$, what differential conditions guarantee that $h_0(t)\geq0$ for all $t\geq0$? Recall that a continuous function $\alpha:\mathbb R\to\mathbb R$ belongs to extended class-$\mathcal K_\infty$ if it is strictly increasing, $\alpha(0)=0$, and $\lim_{s\to \pm \infty} \alpha(s)=\pm \infty$. The following result is the mathematical underpinning of the CBF approach.

\begin{lemma}\label{lem:scalar_positivity}
Let $h_0:[0,\infty)\to\mathbb R$ be smooth, and let $\alpha_1,\ldots,\alpha_r$ be smooth extended class-$\mathcal K_\infty$ functions. Define
\begin{equation}\label{eq:prelim_recursion}
    h_k(t)=\dot h_{k-1}(t)+\alpha_k(h_{k-1}(t)),\qquad 1\leq k\leq r.
\end{equation}
If $h_k(0)\geq 0$ for $0\leq k<r$, and
\begin{equation}\label{eq:prelim_positivity_conditions}
\begin{aligned}
    % h_k(0)&\geq0,&&,\\
    h_r(t)&\geq0,&&t\geq0,
\end{aligned}
\end{equation}
then $h_k(t)\geq0$ for every $0\leq k<r$ and $t\geq0$. In particular, $h_0(t)\geq0$ for all $t\geq0$.
\end{lemma}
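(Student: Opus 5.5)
The plan is a backward induction on $k$, from $k=r-1$ down to $k=0$, built on a single one-step comparison statement. The one-step claim is this: if $g:[0,\infty)\to\mathbb R$ is $C^1$, $\alpha$ is a continuous extended class-$\mathcal K_\infty$ function, $g(0)\geq0$, and $\dot g(t)+\alpha(g(t))\geq0$ for all $t\geq0$, then $g(t)\geq0$ for all $t\geq0$.

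Given the claim, the lemma follows directly. Taking $g=h_{r-1}$ and $\alpha=\alpha_r$, the recursion \eqref{eq:prelim_recursion} gives $\dot h_{r-1}+\alpha_r(h_{r-1})=h_r\geq0$. Since $h_{r-1}(0)\geq0$ by hypothesis, we get $h_{r-1}\geq0$ on $[0,\infty)$. This is exactly the differential inequality needed for $g=h_{r-2}$ with $\alpha=\alpha_{r-1}$. Proceeding downward, at each stage we use $h_k(0)\geq0$ together with $h_{k+1}(t)=\dot h_k(t)+\alpha_{k+1}(h_k(t))\geq0$ to conclude $h_k\geq0$. The induction terminates at $h_0$. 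Smoothness of $h_0$ and of the $\alpha_k$ makes every $h_k$ smooth, so each stage meets the $C^1$ requirement.

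The main step is the one-step claim. I would argue by contradiction. Suppose $g(t_1)<0$ for some $t_1>0$, and set $t_0=\sup\{t\in[0,t_1]: g(t)\geq0\}$. By continuity $g(t_0)=0$, and $g<0$ on $(t_0,t_1]$. On that interval, the fact that $\alpha$ is strictly increasing with $\alpha(0)=0$ gives $\alpha(g(t))<0$. Hence $\dot g(t)\geq-\alpha(g(t))>0$, so $g$ is strictly increasing on $(t_0,t_1]$. By the mean value theorem,
\begin{equation*}
g(t_1)=g(t_0)+\dot g(\xi)(t_1-t_0)>0
\end{equation*}
for some $\xi\in(t_0,t_1)$, which contradicts $g(t_1)<0$.

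This argument avoids any comparison lemma for ODEs and needs no Lipschitz assumption on $\alpha$. The only delicate point is the choice of $t_0$ as the last time $g$ is nonnegative before $t_1$. This choice guarantees that $g<0$ on the entire open interval $(t_0,t_1)$, which is what lets the sign of $\alpha(g)$ be used throughout. The growth condition at $\pm\infty$ plays no role here.
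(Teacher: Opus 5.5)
Your proof is correct and follows the same route as the paper, which also propagates nonnegativity backward from $h_r\geq0$ to $h_{r-1},\ldots,h_0$. At each stage it uses the one-step fact that $\dot h+\alpha(h)\geq0$ together with $h(0)\geq0$ forces $h\geq0$. The paper only sketches this and cites the one-step fact from Xiao and Belta, whereas your last-nonnegative-time and mean value theorem argument proves it directly and uses only that $\alpha$ is negative on $(-\infty,0)$.
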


The key idea is to propagate nonnegativity down the recursion. A scalar differential inequality $\dot h+\alpha(h)\geq0$ preserves $h\geq0$ when $h(0)\geq0$. Starting with $h_r\geq0$, this observation first gives $h_{r-1}\geq0$, then $h_{r-2}\geq0$, and eventually $h_0\geq0$. See~\cite[Lemma 1 \& Theorem 3]{xiao2022hoCBF} and references therein for more details.

Now consider a system with control-affine dynamics
\begin{equation}\label{eq:prelim_control_dynamics}
    \dot x=f(t,x)+g(t,x)u(t),\qquad x\in\mathbb R^d,
\end{equation}
where $f(t,x)\in\mathbb R^d$ and $g(t,x)\in\mathbb R^{d\times m}$ are defined for $(t,x)\in[0,\infty)\times\mathbb R^d$, and $u(t)\in U\subseteq \mathbb R^m$ is the control.

Furthermore, let $b(t,x)$ be a smooth (barrier) function whose nonnegativity describes safety. The goal is to choose $u(t)$ so that the system is safe all positive times; that is,
\begin{equation}\label{eq:prelim_h0}
    h_0(t)=b(t,x(t))\geq0,\qquad t\geq0.
\end{equation}
To this end, one applies Lemma~\ref{lem:scalar_positivity}, defining $h_k=\dot h_{k-1}+\alpha_k(h_{k-1})$ and choosing the control to enforce $h_r\geq0$, provided $h_k(0)\geq0$ for $0\leq k<r$.

The appropriate choice of $r$ depends on the first time the control appears in the sequence $(h_k)$ in~\eqref{eq:prelim_recursion} or, as Lemma~\ref{lem:scalar_expansion} demonstrates, the first time that it appears in $h_0^{(k)}$. We have that
\begin{equation}\label{eq:prelim_first_derivative}
\begin{aligned}
    \dot h_0(t)={}&\partial_t b(t,x)+D_x b(t,x)[f(t,x)+g(t,x)u(t)],
\end{aligned}
\end{equation}
where $D_x$ denotes the (row) Jacobian operator (as opposed to $\nabla_x$ which denoted the (column) gradient operator). We also omit the dependence of $x$ on $t$ for brevity. If $D_xb\,g$ is nonzero, $h_1\geq0$ is an affine constraint on $u$. If this coefficient vanishes identically, we differentiate further. When the control first appears in the $r$-th derivative of $h_0$, we call $r$ the relative degree of $b$ with respect to the dynamics~\eqref{eq:prelim_control_dynamics}; see~\cite[Definition~6]{xiao2022hoCBF}. The lower derivatives depend only on $(t,x(t))$, while the $r$-th derivative is affine in $u(t)$. Thus the higher-order condition $h_r\geq0$ can be used to select the control, subject to feasibility along the resulting evolution.

Thus, the practical aspect of applying CBFs is the computation of $(h_k)$ and identifying their dependence on the control. Xiao and Belta give a Lie-derivative expansion in~\cite[eqs.~(13)--(14)]{xiao2022hoCBF}. We revisit this computation from a slightly different angle: we first express each $h_k$ in~\eqref{eq:prelim_recursion} in terms of $h_0,\cdots,h_0^{(k)}$, without invoking the state dynamics. We then use the dynamics to compute these derivatives.

\begin{lemma}\label{lem:scalar_expansion}
Let $h_0,\alpha_1,\ldots,\alpha_r$ be smooth, and define $h_k$ by~\eqref{eq:prelim_recursion}. Then, for $0\leq k\leq r$,
\begin{equation}\label{eq:prelim_expansion}
    h_k(t)=h_0^{(k)}(t)+B_k\big(h_0(t),\ldots,h_0^{(k-1)}(t)\big),
\end{equation}
where $h_0^{(0)}=h_0$, $B_0=0$, and
\begin{equation}\label{eq:prelim_B_recursion}
\begin{aligned}
    &B_k(y_0,\ldots,y_{k-1})\quad=\sum_{i=1}^{k-1}\partial_{y_{i-1}}B_{k-1}(y_0,\ldots,y_{k-2})\,y_i\\
    &+\alpha_k\big(y_{k-1}+B_{k-1}(y_0,\ldots,y_{k-2})\big),
\end{aligned}
\end{equation}
for $1\leq k\leq r$, where empty sums are discarded.
\end{lemma}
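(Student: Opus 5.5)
We argue by induction on $k$, using only the recursion~\eqref{eq:prelim_recursion} and the chain rule; the state dynamics play no role. For the base case $k=0$, the claim $h_0=h_0^{(0)}+B_0$ holds trivially because $B_0=0$. We also record that each $B_k$ is a smooth function of $(y_0,\ldots,y_{k-1})$. This is needed so that the partial derivatives in~\eqref{eq:prelim_B_recursion} make sense, and it follows inductively from the smoothness of the $\alpha_k$ together with the recursion itself.

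For the inductive step, fix $1\leq k\leq r$ and assume $h_{k-1}(t)=h_0^{(k-1)}(t)+B_{k-1}\big(h_0(t),\ldots,h_0^{(k-2)}(t)\big)$. Since $h_0$ is smooth, so is $h_{k-1}$. Differentiating in $t$ and applying the multivariate chain rule to the composition $t\mapsto B_{k-1}(h_0(t),\ldots,h_0^{(k-2)}(t))$ gives
\begin{equation*}
\dot h_{k-1}(t)=h_0^{(k)}(t)+\sum_{i=1}^{k-1}\partial_{y_{i-1}}B_{k-1}\big(h_0,\ldots,h_0^{(k-2)}\big)\,h_0^{(i)}(t).
\end{equation*}
Here we use that the $(i-1)$-th slot of $B_{k-1}$ holds $h_0^{(i-1)}$, whose time derivative is $h_0^{(i)}$.

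Next we add $\alpha_k(h_{k-1}(t))$ and substitute the inductive expression for $h_{k-1}$ inside $\alpha_k$. This yields $\alpha_k\big(h_0^{(k-1)}+B_{k-1}(h_0,\ldots,h_0^{(k-2)})\big)$. We then compare the result with~\eqref{eq:prelim_B_recursion} evaluated at $y_j=h_0^{(j)}$ for $0\leq j\leq k-1$. The sum term matches, because $y_i=h_0^{(i)}$ for $1\leq i\leq k-1$, and the $\alpha_k$ term matches as well. Hence $h_k=h_0^{(k)}+B_k(h_0,\ldots,h_0^{(k-1)})$, which completes the induction.

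The only point needing care is bookkeeping rather than mathematics. The chain-rule sum must be indexed consistently with the slot labels of $B_{k-1}$, namely slot $y_{i-1}$ paired with $y_i$. We also have to handle the case $k=1$ separately. There $B_0=0$, so the sum is empty and $B_1(y_0)=\alpha_1(y_0)$, which agrees with $h_1=\dot h_0+\alpha_1(h_0)$.
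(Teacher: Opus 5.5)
Your proof is correct and takes the same route as the paper: induction on $k$, differentiating the inductive expansion of $h_{k-1}$ with the chain rule, adding $\alpha_k(h_{k-1})$, and matching term by term with the recursion defining $B_k$. Your remarks on the smoothness of each $B_k$ and on aligning slot $y_{i-1}$ with its derivative $y_i$ are valid details that the paper leaves implicit.
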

\begin{proof}
The cases $k=0$ follow from $B_0=0$. Suppose~\eqref{eq:prelim_expansion} holds for some $0\leq k<r$. Using~\eqref{eq:prelim_recursion} and differentiating~\eqref{eq:prelim_expansion} yields
\begin{equation*}
\begin{aligned}
    h_{k+1}
    ={}&h_0^{(k+1)}+\sum_{i=1}^{k}\partial_{y_{i-1}}B_k\big(h_0,\ldots,h_0^{(k-1)}\big)\,h_0^{(i)}\\
       &+\alpha_{k+1}\big(h_0^{(k)}+B_k\big(h_0,\ldots,h_0^{(k-1)}\big)\big)\\
    ={}&h_0^{(k+1)}+B_{k+1}\big(h_0,\ldots,h_0^{(k)}\big),
\end{aligned}
\end{equation*}
where the last equality follows from~\eqref{eq:prelim_B_recursion}. This completes the induction.
\end{proof}

The implication of identity~\eqref{eq:prelim_expansion} is that the first time the control appears in $(h_k)$ is precisely the first time the control appears in $(h_0^{(k)})$. Morever, functions $(B_k)$ depend only on the functions $(\alpha_k)$ and can be precomputed irrespective of the state dynamics or $h_0$. Thus, the computation of $(h_k)$ essentially reduces to the computation of $(h_0^{(k)})$.

In what follows, we develop the analytic framework for computing these higher order derivatives for $h_0(t)=\mathcal F(t,\rho_t)$ and applying the higher-order CBF principle in Lemma~\ref{lem:scalar_positivity}, where $\rho_t$ is the swarm density at time $t$, and $\mathcal{F}$ is the functional that encodes the safety of the swarm. 

\section{Higher-order mean-field control barriers}\label{sec:derivatives}\label{sec:safety}

Consider a swarm whose individual agents follow
\begin{equation}\label{eq:dynamics}
    \dot x=f(t,x)+g(t,x)u(t,x),\qquad x\in\mathbb R^d,
\end{equation}
where $f$ and $g$ are as in~\eqref{eq:prelim_control_dynamics}, and $u(t,x)\in U\subseteq\mathbb R^m$ is a feedback control. The feedback control assumption is necessary for deriving the mean-field dynamics.

More specifically, the probability distribution of the swarm evolves according to the continuity equation
\begin{equation}\label{eq:CE}
    \partial_t\rho_t+\nabla\cdot\big(\rho_t(f+gu)\big)=0.
\end{equation}

In the mean-field setting, safety is encoded by a functional $\mathcal F(t,\rho)$, and our objective is to choose the control so that
\begin{equation*}
    h_0(t)=\mathcal F(t,\rho_t)\geq0,\qquad t\geq0.
\end{equation*}
By Lemma~\ref{lem:scalar_expansion}, the first task for applying the CBF mechanism of Section~\ref{sec:preliminaries} is to compute the higher-order derivatives of $h_0$ along~\eqref{eq:CE}.

\subsection{Differentiation along the density dynamics}

Throughout the paper, we assume sufficient smoothness and decay to justify the differentiations and integrations by parts. The first variation of $\mathcal F$ is defined by
\begin{equation*}
    \left.\frac{d}{dh}\mathcal F(t,\rho+h\eta)\right|_{h=0}
    =\int_{\mathbb R^d}\delta_\rho\mathcal F(t,\rho)(x)\eta(x)\,dx
\end{equation*}
for admissible perturbations $\eta$ with $\int_{\mathbb R^d}\eta\,dx=0$. Furthermore, we define
\begin{equation}\label{eq:operators}
\begin{aligned}
    &D^W_\rho\mathcal F(t,\rho)(x)=D_x[\delta_\rho\mathcal F(t,\rho)(x)],\\
    &\mathcal L_f\mathcal F(t,\rho)=\partial_t\mathcal F(t,\rho)+\int_{\mathbb R^d} D^W_\rho\mathcal F(t,\rho)(x)f(t,x)\rho(x)\,dx.
\end{aligned}
\end{equation}
Thus $\mathcal L_f$ differentiates a functional along the \emph{uncontrolled} density dynamics, including its explicit time dependence. The chain rule yields
\begin{equation*}
    \frac{d\mathcal F(t,\rho_t)}{dt}
    =\partial_t\mathcal F(t,\rho_t)
    +\int_{\mathbb R^d}\delta_\rho\mathcal F(t,\rho_t)(x)\partial_t\rho_t(x)\,dx.
\end{equation*}
Substituting~\eqref{eq:CE} and integrating by parts yields
\begin{equation}\label{eq:chain_rule}
\begin{aligned}
    \frac{d\mathcal F(t,\rho_t)}{dt}
    ={}&\mathcal L_f\mathcal F(t,\rho_t)+\int_{\mathbb R^d} D^W_\rho\mathcal F(t,\rho_t)\,g\, u\,\rho_t\,dx,
\end{aligned}
\end{equation}
where we omit the dependence of integrands on $t,x$ for brevity.
This identity separates the drift contribution from the term involving the control. If the latter vanishes identically, we apply the same identity to $\mathcal L_f\mathcal F$ and continue until the control appears. As usual, we set $\mathcal{L}_f^0 \mathcal{F}=\mathcal{F}$.

\subsection{Relative degree and higher-order derivatives}
\begin{definition}\label{def:relative_degree}
The functional $\mathcal F$ has relative degree $r\geq1$ with respect to~\eqref{eq:CE} if
\begin{equation}\label{eq:relative_degree}
    D^W_\rho[\mathcal L_f^k\mathcal F](t,\rho)(x)g(t,x)\rho(x)=0
\end{equation}
for all $(t,\rho,x)$ and $0\leq k\leq r-2$, whereas the same expression with $k=r-1$ is nonzero for some $(t,\rho,x)$.
\end{definition}

The relative degree identifies the derivative in which the control can enter but it does not guarantee that an admissible control satisfying $h_r\geq0$ exists at every density encountered along the evolution. The latter question must be analyzed separately on a case by case basis.

\begin{proposition}\label{prop:derivatives}
Suppose that $\mathcal F$ has relative degree $r$ and $\rho_t$ solves~\eqref{eq:CE}. Let $h_0(t)=\mathcal F(t,\rho_t)$, and define $h_k$ by~\eqref{eq:prelim_recursion} for smooth extended class-$\mathcal K_\infty$ functions $\alpha_1,\ldots,\alpha_r$. Then
\begin{equation}\label{eq:derivatives}
\begin{aligned}
    h_0^{(k)}(t)&=\mathcal L_f^k\mathcal F(t,\rho_t),\qquad 0\leq k<r,\\
    h_0^{(r)}(t)&=\mathcal L_f^r\mathcal F(t,\rho_t)+\int_{\mathbb R^d}D^W_\rho[\mathcal L_f^{r-1}\mathcal F](t,\rho_t)\,g\, u\,\rho_t\,dx.
\end{aligned}
\end{equation}
Consequently, with $(B_k)$ as in Lemma~\ref{lem:scalar_expansion}, the auxiliary functions satisfy, for $0\leq k<r$,
\begin{equation}\label{eq:fk_density}
\begin{aligned}
    h_k(t)&=\mathcal L_f^k\mathcal F+B_k\big(\mathcal F,\ldots,\mathcal L_f^{k-1}\mathcal F\big),\\
    h_r(t)&=\mathcal L_f^r\mathcal F+B_r\big(\mathcal F,\ldots,\mathcal L_f^{r-1}\mathcal F\big)\\
    &+\int_{\mathbb R^d}D^W_\rho[\mathcal L_f^{r-1}\mathcal F](t,\rho_t)\,g\, u\,\rho_t\,dx,
\end{aligned}
\end{equation}
where all $\mathcal{L}_f^k\mathcal F$ are evaluated at $(t,\rho_t)$.
\end{proposition}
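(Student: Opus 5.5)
We prove the derivative formulas~\eqref{eq:derivatives} by induction on $k$, using the chain rule~\eqref{eq:chain_rule} at each step. The identity~\eqref{eq:fk_density} then follows by substituting them into Lemma~\ref{lem:scalar_expansion}. The key observation is that~\eqref{eq:chain_rule} holds for \emph{any} sufficiently smooth functional $\mathcal G(t,\rho)$, not only for $\mathcal F$. In particular it applies to $\mathcal G=\mathcal L_f^k\mathcal F$, which is again a functional of $(t,\rho)$ by construction in~\eqref{eq:operators}:
\begin{equation*}
    \frac{d}{dt}\mathcal L_f^k\mathcal F(t,\rho_t)=\mathcal L_f^{k+1}\mathcal F(t,\rho_t)+\int_{\mathbb R^d}D^W_\rho[\mathcal L_f^{k}\mathcal F](t,\rho_t)\,g\,u\,\rho_t\,dx.
\end{equation*}

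The base case $k=0$ is trivial, since $h_0=\mathcal F(t,\rho_t)=\mathcal L_f^0\mathcal F(t,\rho_t)$. Now suppose $h_0^{(k)}(t)=\mathcal L_f^k\mathcal F(t,\rho_t)$ for some $k\le r-2$. Differentiating with the display above gives $h_0^{(k+1)}$ as $\mathcal L_f^{k+1}\mathcal F$ plus the control integral. That integral vanishes because Definition~\ref{def:relative_degree} says the integrand $D^W_\rho[\mathcal L_f^k\mathcal F]\,g\,\rho$ is identically zero for $k\le r-2$, for every $(t,\rho,x)$ and hence in particular at $\rho=\rho_t$. This yields the first line of~\eqref{eq:derivatives} for all $0\le k\le r-1$. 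Applying the display once more with $k=r-1$ gives the second line; here we simply keep the control term, which need not vanish.

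For~\eqref{eq:fk_density}, Lemma~\ref{lem:scalar_expansion} gives $h_k=h_0^{(k)}+B_k(h_0,\ldots,h_0^{(k-1)})$ for $0\le k\le r$, since $h_0$ is smooth along a smooth solution. Substituting~\eqref{eq:derivatives} proves the claim directly. For $k<r$ every argument is $\mathcal L_f^j\mathcal F(t,\rho_t)$ with $j<r$. For $k=r$ the arguments of $B_r$ involve only orders up to $r-1$, so the control enters solely through $h_0^{(r)}$, producing the stated affine control term.

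The main obstacle is only one of justification. We must verify that each $\mathcal L_f^k\mathcal F$ is regular enough to have a first variation and that the integration by parts in~\eqref{eq:chain_rule} is legitimate. This also requires that $t\mapsto h_0(t)$ is $r$ times differentiable, so that Lemma~\ref{lem:scalar_expansion} applies. Both points fall under the paper's standing smoothness and decay assumption, and we invoke that assumption rather than prove it. One further subtlety: the first variation is defined only up to additive constants, because the perturbations have zero mass. The operator $D^W_\rho$ removes this ambiguity, so~\eqref{eq:relative_degree} and the formulas are well defined.
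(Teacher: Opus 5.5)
Your proposal is correct and follows the paper's proof: apply the chain rule \eqref{eq:chain_rule} successively to $\mathcal F,\mathcal L_f\mathcal F,\ldots,\mathcal L_f^{r-1}\mathcal F$, use Definition~\ref{def:relative_degree} to kill the control term for $k\le r-2$ and keep it at $k=r-1$, then substitute into Lemma~\ref{lem:scalar_expansion}. Your extra remarks, that only $r$-fold differentiability of $h_0$ is needed and that $D^W_\rho$ removes the additive-constant ambiguity of $\delta_\rho$, are accurate refinements the paper leaves implicit.
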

\begin{proof}
Equalities~\eqref{eq:derivatives} follow from a successive application of~\eqref{eq:chain_rule} to $\mathcal F,\mathcal L_f\mathcal F,\ldots,\mathcal L_f^{r-1}\mathcal F$. Definition~\ref{def:relative_degree} makes the control term vanish in the first $r-1$ differentiations, while the $r$-th differentiation retains the control integral. Substituting~\eqref{eq:derivatives} into~\eqref{eq:prelim_expansion} yields~\eqref{eq:fk_density}.
\end{proof}

As in the finite-dimensional case, $h_k$ depends only on $(t,\rho_t)$ for $k<r$, while $h_r$ is affine in the control, even for nonlinear $(\alpha_k)$.

\subsection{Safe controls and preservation of safety}
Let $\mathcal U$ denote the admissible control fields $u:\mathbb R^d\to U$ for which the integrals above are defined. Define
\begin{equation}\label{eq:control_general}
\begin{aligned}
    &\mathcal K_{\mathrm{CBF}}(t,\rho)=\big\{u\in\mathcal U:\;\mathcal L_f^r\mathcal F(t,\rho)\\
    &\quad+B_r\big(\mathcal F(t,\rho),\ldots,\mathcal L_f^{r-1}\mathcal F(t,\rho)\big)\\
    &\quad+\int_{\mathbb R^d}D^W_\rho[\mathcal L_f^{r-1}\mathcal F](t,\rho)\,g\, u\,\rho\,dx\geq0\big\}.
\end{aligned}
\end{equation}
Here $u=u(x)$ denotes a control field at a fixed time.
Membership in this set is precisely the condition $h_r\geq0$. It imposes one affine inequality on the control field, in addition to any admissibility restrictions such as $u(x)\in U$.

\begin{theorem}\label{thm:safety}
Let $\mathcal F$, $(\alpha_k)$, and $(B_k)$ be as in Proposition~\ref{prop:derivatives}. Assume that $\rho_t$ evolves according to~\eqref{eq:CE} for $t\geq0$, with initial density $\rho_0$ satisfying
\begin{equation}\label{eq:initial_general}
\begin{aligned}
    \mathcal L_f^k\mathcal F(0,\rho_0)+B_k\big(\mathcal F(0,\rho_0),\ldots,\mathcal L_f^{k-1}\mathcal F(0,\rho_0)\big)\geq0,
\end{aligned}
\end{equation}
for $0\leq k<r$, and that the control satisfies
\begin{equation}\label{eq:safe_control_inclusion}
    u(t,\cdot)\in\mathcal K_{\mathrm{CBF}}(t,\rho_t),\qquad t\geq0,
\end{equation}
Then $\mathcal F(t,\rho_t)\geq0$ for all $t\geq0$.
\end{theorem}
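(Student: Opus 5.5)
The plan is to reduce the statement to the scalar result, Lemma~\ref{lem:scalar_positivity}, applied to $h_0(t)=\mathcal F(t,\rho_t)$. All the mean-field structure needed has already been collected in Proposition~\ref{prop:derivatives}. The proof is therefore a bookkeeping argument: identify the hypotheses of the theorem with the hypotheses of the lemma.

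First, set $h_0(t)=\mathcal F(t,\rho_t)$ and define $h_1,\ldots,h_r$ by the recursion~\eqref{eq:prelim_recursion} with the given $\alpha_1,\ldots,\alpha_r$. Under the standing smoothness assumptions, the chain rule~\eqref{eq:chain_rule} applies repeatedly along the solution of~\eqref{eq:CE}. Hence $h_0$ is smooth enough to define $h_r$, which is all that Lemma~\ref{lem:scalar_positivity} actually uses; that lemma needs $h_0\in C^r$ rather than full smoothness. By Proposition~\ref{prop:derivatives}, namely~\eqref{eq:fk_density}, each $h_k(t)$ for $0\le k<r$ equals
\[
\mathcal L_f^k\mathcal F(t,\rho_t)+B_k\big(\mathcal F(t,\rho_t),\ldots,\mathcal L_f^{k-1}\mathcal F(t,\rho_t)\big).
\]
The function $h_r(t)$ equals the same type of expression plus the control integral.

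Second, match the hypotheses. At $t=0$, $\rho_t=\rho_0$, so the formula above for $h_k(0)$ is exactly the left side of~\eqref{eq:initial_general}. Hence $h_k(0)\ge0$ for $0\le k<r$. For $t\ge0$, the inclusion $u(t,\cdot)\in\mathcal K_{\mathrm{CBF}}(t,\rho_t)$ means, by the definition~\eqref{eq:control_general} evaluated at $\rho=\rho_t$, that the expression for $h_r(t)$ in~\eqref{eq:fk_density} is nonnegative. That is, $h_r(t)\ge0$ for all $t\ge0$.

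Third, invoke Lemma~\ref{lem:scalar_positivity} to conclude that $h_k(t)\ge0$ for all $0\le k<r$ and $t\ge0$. In particular, $\mathcal F(t,\rho_t)=h_0(t)\ge0$.

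The only point needing care is regularity. The control must be regular enough in $t$ and $x$ that~\eqref{eq:CE} has a solution along which $t\mapsto\mathcal L_f^k\mathcal F(t,\rho_t)$ is continuously differentiable for $k<r$. The relative-degree condition guarantees that these lower derivatives contain no control term, so only $h_r$ involves $u$, and $h_r$ need only be continuous or even merely measurable. I would remark that the comparison argument behind Lemma~\ref{lem:scalar_positivity}, namely $\dot h_{k-1}\ge-\alpha_k(h_{k-1})$ with $h_{k-1}(0)\ge0$ implying $h_{k-1}\ge0$, works for absolutely continuous $h_{r-1}$ with the inequality holding almost everywhere. This covers controls that are only measurable in time, and it is consistent with the paper's blanket smoothness assumption.
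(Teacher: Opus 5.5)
Your proposal is correct and follows the paper's proof: Proposition~\ref{prop:derivatives} identifies~\eqref{eq:initial_general} with $h_k(0)\geq0$ for $k<r$ and the inclusion~\eqref{eq:safe_control_inclusion} with $h_r(t)\geq0$, and Lemma~\ref{lem:scalar_positivity} then gives $h_0(t)=\mathcal F(t,\rho_t)\geq0$. Your extra regularity remark is a valid refinement that the paper leaves implicit under its blanket smoothness assumption: the comparison step needs only $h_{r-1}$ absolutely continuous with $h_r\geq0$ almost everywhere, so controls that are merely measurable in time are covered.
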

\begin{proof}
Proposition~\ref{prop:derivatives} identifies~\eqref{eq:initial_general} with $h_k(0)\geq0$ for $k<r$, and~\eqref{eq:safe_control_inclusion} with $h_r(t)\geq0$. Lemma~\ref{lem:scalar_positivity} then yields the conclusion.
\end{proof}
\begin{remark}
We assume that $f$, $g$, $u$, and $U$ are such that the trajectories of~\eqref{eq:dynamics} do not blow up in finite time and sufficiently fast decay of $\rho_0$ at infinity is preserved by $\rho_t$ for every $t\geq0$. These properties hold, for example, when the velocity field $f+gu$ is globally Lipschitz in space and locally uniformly in time; see~\cite[Section 8.1]{ambrosio08} for more details.
\end{remark}

\section{Correlation barriers for swarm control}\label{sec:kernels}

Theorem~\ref{thm:safety} provides the safety mechanism for the swarm; we need to ensure that the distributed control $u(t,\cdot)$ is in the set of safe controls $\mathcal K_{\mathrm{CBF}}(t,\rho_t)$ at all times $t\geq 0$.

Thus, we should be able to compute $\mathcal K_{\mathrm{CBF}}(t,\rho)$ for obtaining a practical algorithm. In principle, the drift derivatives $\mathcal L_f^j\mathcal F$ might lead to complex expressions due to higher-order differentiations in the space of probability measures. Here, we show that for two practical examples of the cross-correlation and self-correlation functionals, these differentiations are structure preserving. More specifically, if $\mathcal{F}$ is a cross-correlation or a self-correlation, then $\mathcal L_f^j\mathcal F$ remain cross-correlation and self-correlation, respectively. Moreover, differentiations reduce to kernel recursions. 

\subsection{Correlation functionals and kernel recursions}

Assume that $\rho_t^*$ is a reference density that represents either a tracking target or an obstacle and satisfies
\begin{equation}\label{eq:reference_CE}
    \partial_t\rho_t^*(t,y)+\nabla_y\cdot(\rho_t^*(t,y)f^*(t,y))=0,
\end{equation}
with a reference vector field $f^*$; a stationary $\rho^*_t$ corresponds to $f^*=0$. For smooth kernels $k$ and $h$, with $h(t,x,y)=h(t,y,x)$, we define
\begin{equation}\label{eq:cross}
    R(t,\rho)=\int_{\mathbb R^{2d}}k(t,x,y)\rho(x)\rho_t^*(y)\,dx\,dy
\end{equation}
and
\begin{equation}\label{eq:self}
    S(t,\rho)=\int_{\mathbb R^{2d}}h(t,x,y)\rho(x)\rho(y)\,dx\,dy.
\end{equation}
We use the barriers
\begin{equation}\label{eq:correlation_barriers}
    \mathcal F(t,\rho)=\sigma(\epsilon-R(t,\rho)),\quad
    \mathcal F^{\mathrm s}(t,\rho)=\epsilon_{\mathrm s}-S(t,\rho),
\end{equation}
where $\epsilon,\epsilon_{\mathrm s}>0$ and $\sigma\in\{-1,1\}$. For nonnegative kernels that are large for nearby states, $\sigma=1$ limits average proximity to an obstacle population, while $\sigma=-1$ maintains proximity to a target population. The self-correlation bound $S(t,\rho)\leq \epsilon^{\mathrm s}$ limits crowding within the swarm and can accompany either constraint.

\begin{proposition}\label{prop:cross}\label{prop:self}
Set $K_0=k$, $A_0=h$, and define
\begin{equation}\label{eq:kernel_recursions}
\begin{aligned}
    K_{j+1}&=\partial_tK_j+D_xK_j\, f(t,x)+D_yK_j\, f^*(t,y),\\
    A_{j+1}&=\partial_tA_j+D_xA_j\, f(t,x)+D_yA_j\, f(t,y).
\end{aligned}
\end{equation}
Then every $A_j$ is symmetric, and, for $j\geq0$,
\begin{equation}\label{eq:cross_derivatives}
\begin{aligned}
    \mathcal L_f^jR(t,\rho)&=\int_{\mathbb R^{2d}}K_j(t,x,y)\rho(x)\rho_t^*(y)\,dx\,dy,\\
    D^W_\rho [\mathcal L _f^jR(t,\rho)](x)&=\int_{\mathbb R^d}D_xK_j(t,x,y)\rho_t^*(y)\,dy,
\end{aligned}
\end{equation}
\begin{equation}\label{eq:self_derivatives}
\begin{aligned}
    \mathcal L_f^jS(\rho,t)&=\int_{\mathbb R^{2d}}A_j(t,x,y)\rho(x)\rho(y)\,dx\,dy,\\
    D^W_\rho [\mathcal L_f^jS(\rho,t)](x)&=2\int_{\mathbb R^d}D_xA_j(t,x,y)\rho(y)\,dy.
\end{aligned}
\end{equation}
\end{proposition}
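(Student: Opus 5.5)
The plan is induction on $j$, proving the formulas for a generic cross-correlation (respectively self-correlation) functional. One application of $\mathcal L_f$ to such a functional will turn out to produce a functional of the same form, with its kernel replaced by the next kernel in \eqref{eq:kernel_recursions}.

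\textbf{Base case and symmetry.} For $j=0$ the first identities in \eqref{eq:cross_derivatives} and \eqref{eq:self_derivatives} are just the definitions \eqref{eq:cross} and \eqref{eq:self}, since $K_0=k$ and $A_0=h$. Symmetry of the $A_j$ follows by induction as well. If $A_j(t,x,y)=A_j(t,y,x)$, then swapping $x$ and $y$ in
\[
\partial_tA_j+D_xA_j\,f(t,x)+D_yA_j\,f(t,y)
\]
exchanges the two transport terms and leaves $\partial_tA_j$ unchanged. Hence $A_{j+1}$ is symmetric.

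\textbf{First variations.} Let $\tilde R(t,\rho)=\int K(t,x,y)\rho(x)\rho^*_t(y)\,dx\,dy$ with $K$ smooth. This functional is linear in $\rho$, so
\[
\delta_\rho\tilde R(t,\rho)(x)=\int K(t,x,y)\rho_t^*(y)\,dy,
\]
and differentiating under the integral gives the $D^W_\rho$ formula in \eqref{eq:cross_derivatives}. Now let $\tilde S(t,\rho)=\int A\,\rho(x)\rho(y)\,dx\,dy$ with $A$ symmetric. Expanding $\tilde S(t,\rho+h\eta)$ to first order in $h$ and using symmetry gives
\[
\delta_\rho\tilde S(t,\rho)(x)=2\int A(t,x,y)\rho(y)\,dy,
\]
and hence the factor $2$ in \eqref{eq:self_derivatives}. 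Symmetry of the $A_j$ is exactly what is needed here, which is why it is proved first.

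\textbf{Computing $\mathcal L_f$ of a cross-correlation.} This is the main step, because it is the one place the reference dynamics enter. By \eqref{eq:operators}, $\mathcal L_f\tilde R=\partial_t\tilde R+\int D^W_\rho\tilde R\,f\,\rho\,dx$. The partial time derivative $\partial_t\tilde R$ holds $\rho$ fixed, but the functional $R$ depends explicitly on $t$ both through $K$ and through $\rho_t^*$. Therefore
\[
\partial_t\tilde R=\int \partial_tK\,\rho\,\rho^*_t+\int K\,\rho(x)\,\partial_t\rho^*_t(y).
\]
I substitute \eqref{eq:reference_CE} into the second term and integrate by parts in $y$ (decay is assumed). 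This turns it into $\int D_yK\,f^*(t,y)\,\rho(x)\rho^*_t(y)$. The drift term equals $\int D_xK\,f(t,x)\,\rho(x)\rho^*_t(y)$. Summing the three contributions gives a cross-correlation whose kernel is $\partial_tK+D_xK\,f+D_yK\,f^*$, which matches the recursion for $K_{j+1}$.

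\textbf{Computing $\mathcal L_f$ of a self-correlation.} Here $\partial_t\tilde S=\int\partial_tA\,\rho\rho$, since no reference density appears. The drift term is
\[
2\int D_xA(t,x,y)f(t,x)\rho(x)\rho(y)\,dx\,dy.
\]
Using symmetry of $A$ (relabel $x\leftrightarrow y$), I split it into two equal halves: $\int D_xA\,f(t,x)\,\rho\rho+\int D_yA\,f(t,y)\,\rho\rho$. The result is a self-correlation with kernel $A_{j+1}$.

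\textbf{Closing the induction.} Apply these two computations with $K=K_j$ and $A=A_j$, and use the induction hypothesis $\mathcal L_f^j R=\tilde R$ with kernel $K_j$ (similarly for $S$). This yields the formulas for $\mathcal L_f^{j+1}$. The $D^W_\rho$ formulas at every level then follow from the first-variation step.
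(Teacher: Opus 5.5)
Your proposal is correct and follows the same route as the paper's much terser proof. Both arguments get the first variations from linearity for $R$ and from symmetry for $S$, then apply $\mathcal L_f$ once, integrating by parts against the reference continuity equation for $R$ and symmetrizing the drift term for $S$, to obtain the kernel recursions. Both close by induction, using that the $A$-recursion preserves symmetry. You also make explicit a detail the paper leaves implicit: $\partial_t R$ picks up $\partial_t\rho_t^*$, and this is the source of the $D_yK_j\,f^*$ term.
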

\begin{proof}
The first variations follow from linearity in $\rho$ for $R_j$ and symmetry for $S_j$. Applying $\mathcal L_f$ and integrating by parts yields~\eqref{eq:kernel_recursions}: the second argument is transported by $f^*$ for cross-correlation and by $f$ for self-correlation. The latter recursion preserves symmetry, so induction gives the result.
\end{proof}

Hence, for barriers $\mathcal{F}$ and $\mathcal{F}^{\mathrm s}$ in~\eqref{eq:correlation_barriers} we obtain that
\begin{equation*}
    \mathcal L_f^j\mathcal F=-\sigma \mathcal L_f^j R,\qquad
    \mathcal L_f^j\mathcal F^{\mathrm s}=-\mathcal L_f^j S,\quad \forall j\geq 1.
\end{equation*}

\subsection{Position constraints for double integrators}\label{sec:DI}

Here, we discuss a specific example of functionals with higher relative degree. More specifically, we consider a swarm with the double integrator dynamics and \emph{position-only} correlation functionals. Hence, we aim at controlling the swarm through a distributed acceleration so that the swarm tracks or avoids a reference in the physical space.

To this end, let $x=(q,v)\in\mathbb R^{2n}$ be the state representing the position and velocity, and consider the dynamics
\begin{equation}\label{eq:DI}
    \dot q=v,~ \dot v=u\quad \Longleftrightarrow \quad
    f=\begin{pmatrix}v\\0\end{pmatrix},~
    g=\begin{pmatrix}0\\I_n\end{pmatrix}~ \text{in \eqref{eq:dynamics}}.
\end{equation}

Furthermore, let $k(t,x,y)=\kappa(t,q,p)$ and $h(t,x,y)=\kappa_{\mathrm s}(t,q,p)$, where $y=(p,w)$. Integrating out velocities in~\eqref{eq:cross}--\eqref{eq:self} shows that $R$ and $S$ depend only on the position marginals $\nu(q)=\int_{\mathbb R^n}\rho(q,v)\,dv$ and $\nu_t^*(p)=\int_{\mathbb R^n}\rho_t^*(p,w)\,dw$. Their derivatives, however, involve velocities.

The following proposition gives the drift derivatives and control terms needed to apply the higher-order CBF construction to these position constraints.

\begin{proposition}\label{prop:DI}
Assume that the functionals $\mathcal{F},\mathcal{F}^{\mathrm s}$ are as in~\eqref{eq:correlation_barriers}, and the reference density $\rho^*_t$ evolves under the prescribed dynamics $\dot p=w$, $\dot w=a^*(t,p,w)$, independently of the swarm control. Set $K_0=\kappa(t,q,p)$ and $A_0=\kappa_{\mathrm s}(t,q,p)$, and define
\begin{equation}\label{eq:DI_recursions}
\begin{aligned}
 K_{j+1}&=(\partial_t+v\cdot\nabla_q+w\cdot\nabla_p+a^*\cdot\nabla_w)K_j,\\
 A_{j+1}&=(\partial_t+v\cdot\nabla_q+w\cdot\nabla_p)A_j.
\end{aligned}
\end{equation}
Then, for $j\geq1$, we have that
\begin{equation}\label{eq:DI_drift_derivatives}
\begin{aligned}
 \mathcal L_f^j\mathcal F(t,\rho)
 &=-\sigma\int_{\mathbb R^{4n}}K_j(t,x,y)\rho(x)\rho_t^*(y)\,dx\,dy,\\
 \mathcal L_f^j\mathcal F^{\mathrm s}(t,\rho)
 &=-\int_{\mathbb R^{4n}}A_j(t,x,y)\rho(x)\rho(y)\,dx\,dy,
\end{aligned}
\end{equation}
where $x=(q,v)$, $y=(p,w)$. Thus, $\mathcal F$ and $\mathcal F^{\mathrm s}$ have relative degree $r=2$, and the control-affine terms in~\eqref{eq:control_general} are
\begin{equation}\label{eq:DI_control_terms}
\begin{aligned}
&\int_{\mathbb R^d}D^W_\rho[\mathcal L_f\mathcal F](t,\rho)\,g\, u\,\rho_t\,dx\\
&=-\sigma\int_{\mathbb R^{2n}}b(t,q)\cdot u(t,q,v)\rho(q,v)\,dq\,dv,\\
&\int_{\mathbb R^d}D^W_\rho[\mathcal L_f\mathcal F^{\mathrm{s}}](t,\rho)\,g\, u\,\rho_t\,dx\\
    &=-\int_{\mathbb R^{2n}}c(t,\rho,q)\cdot u(t,q,v)\rho(q,v)\,dq\,dv,
\end{aligned}
\end{equation}
where
\begin{equation}\label{eq:bc}
\begin{aligned}
    b(t,q)&=\int_{\mathbb R^n}\nabla_q\kappa(t,q,p)\nu_t^*(p)\,dp,\\
    c(t,\rho,q)&=2\int_{\mathbb R^n}\nabla_q\kappa_{\mathrm s}(t,q,p)\nu(p)\,dp.
\end{aligned}
\end{equation}
\end{proposition}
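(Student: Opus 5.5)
The plan is to specialize Proposition~\ref{prop:cross} to the double-integrator data~\eqref{eq:DI}. The reference dynamics $\dot p=w$, $\dot w=a^*$ correspond to $f^*(t,y)=(w,a^*(t,p,w))$. With $f=(v,0)$ and $x=(q,v)$, $y=(p,w)$, the general recursions~\eqref{eq:kernel_recursions} become
\[
D_xK_j\,f=v\cdot\nabla_qK_j,\qquad D_yK_j\,f^*=w\cdot\nabla_pK_j+a^*\cdot\nabla_wK_j .
\]
Similarly, for the self-correlation, $D_xA_j\,f+D_yA_j\,f(t,y)=v\cdot\nabla_qA_j+w\cdot\nabla_pA_j$. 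This is exactly~\eqref{eq:DI_recursions}. The formulas~\eqref{eq:DI_drift_derivatives} then follow from~\eqref{eq:cross_derivatives}--\eqref{eq:self_derivatives}, together with the identities $\mathcal L_f^j\mathcal F=-\sigma\mathcal L_f^jR$ and $\mathcal L_f^j\mathcal F^{\mathrm s}=-\mathcal L_f^jS$ for $j\geq1$. These hold because the constants $\sigma\epsilon$ and $\epsilon_{\mathrm s}$ are annihilated by $\partial_t$ and have zero first variation.

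Next I will establish relative degree two, using Definition~\ref{def:relative_degree}. Multiplying by $g=(0,I_n)^\top$ picks out the $v$-block of $D_x$. For $k=0$, the kernels $K_0=\kappa(t,q,p)$ and $A_0=\kappa_{\mathrm s}(t,q,p)$ are independent of $v$. Hence $D_xK_0\,g=\nabla_vK_0^\top=0$, and likewise for $A_0$. By the second lines of~\eqref{eq:cross_derivatives}--\eqref{eq:self_derivatives}, condition~\eqref{eq:relative_degree} then holds for $k=0$.

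For $k=1$, I compute the first kernels explicitly:
\[
K_1=\partial_t\kappa+v\cdot\nabla_q\kappa+w\cdot\nabla_p\kappa,
\]
where the $a^*\cdot\nabla_w$ term drops out because $\kappa$ is independent of $w$. Similarly, $A_1=\partial_t\kappa_{\mathrm s}+v\cdot\nabla_q\kappa_{\mathrm s}+w\cdot\nabla_p\kappa_{\mathrm s}$. This gives $\nabla_vK_1=\nabla_q\kappa(t,q,p)$ and $\nabla_vA_1=\nabla_q\kappa_{\mathrm s}(t,q,p)$.

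Substituting into the $D^W_\rho$ formulas gives
\[
D^W_\rho[\mathcal L_f\mathcal F](x)\,g=-\sigma\int\nabla_q\kappa(t,q,p)^\top\rho_t^*(p,w)\,dp\,dw .
\]
Integrating out $w$ produces the marginal $\nu_t^*$, so this equals $-\sigma\,b(t,q)^\top$. In the same way, the self-correlation term reduces to $-c(t,\rho,q)^\top$; the factor $2$ comes from symmetry, and integrating out the velocity of the second argument produces $\nu$. Pairing with $u\rho$ and integrating yields~\eqref{eq:DI_control_terms}.

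The main obstacle is the claim that the $k=1$ expression is \emph{nonzero for some} $(t,\rho,x)$. The definition requires this, but it fails if, for example, $\kappa$ is independent of $q$ or $b\equiv0$. I will state it under the natural nondegeneracy assumption: $\nabla_q\kappa$ is not identically zero, so that $b(t,q)\neq0$ for some $t,q$ (respectively, $c\neq0$ for some $\rho$). For instance, one can take $\rho$ concentrated near a point where $\int\nabla_q\kappa_{\mathrm s}\,\nu\neq0$. Under this assumption, the relative degree is exactly two.

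A secondary point is to check the integrations by parts in the $w$ variable, namely that the term involving $a^*$ does not spoil the structure. This is covered by the general kernel recursion of Proposition~\ref{prop:cross}, since $f^*$ is arbitrary there.
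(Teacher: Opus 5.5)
Your proposal is correct and follows the paper's proof essentially step for step: you specialize Proposition~\ref{prop:cross} with $f=(v,0)$ and $f^*=(w,a^*)$, observe $\nabla_vK_0=\nabla_vA_0=0$ and $\nabla_vK_1=\nabla_q\kappa$, $\nabla_vA_1=\nabla_q\kappa_{\mathrm s}$, and then integrate out velocities in the first-variation formulas. Your nondegeneracy caveat for the ``exactly two'' claim is a legitimate point that the paper glosses over, with one correction: for the cross term, $\nabla_q\kappa\not\equiv0$ does not by itself give $b\not\equiv0$, because $b$ averages against the fixed marginal $\nu_t^*$, so the right hypothesis there is $b\not\equiv0$ itself; for the self term, $\nabla_q\kappa_{\mathrm s}\not\equiv0$ does suffice, since $\rho$ may be chosen freely.
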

\begin{proof}
Substituting the double-integrator drift and the reference dynamics into Proposition~\ref{prop:cross} gives~\eqref{eq:DI_recursions} and~\eqref{eq:DI_drift_derivatives}. Since $g$ selects velocity derivatives, $D_vK_0=D_vA_0=0$ makes the first control terms vanish. One application of the kernel recursions gives $D_vK_1=D_q\kappa$ and $D_vA_1=D_q\kappa_{\mathrm s}$. The first-variation formulas in Proposition~\ref{prop:cross}, followed by integration over velocities, yield the stated control terms.
\end{proof}

\begin{remark}
    For the double integrator dynamics, setting $a^*(t,p,w)=0$ does not generally make the reference $\rho^*_t$ stationary. Thus, it is more convenient to treat the stationary reference case separately. For that, one simply needs to replace $\rho^*_t$ by $\rho^*$ in Proposition~\ref{prop:DI}, and discard the $w\cdot\nabla_p+a^*\cdot\nabla_w$ term in recursion for $(K_j)$ in~\eqref{eq:DI_recursions}. Proposition~\ref{prop:DI} remains valid.
\end{remark}

\begin{figure*}[!t]
    \centering
    \includegraphics[width=0.96\textwidth]{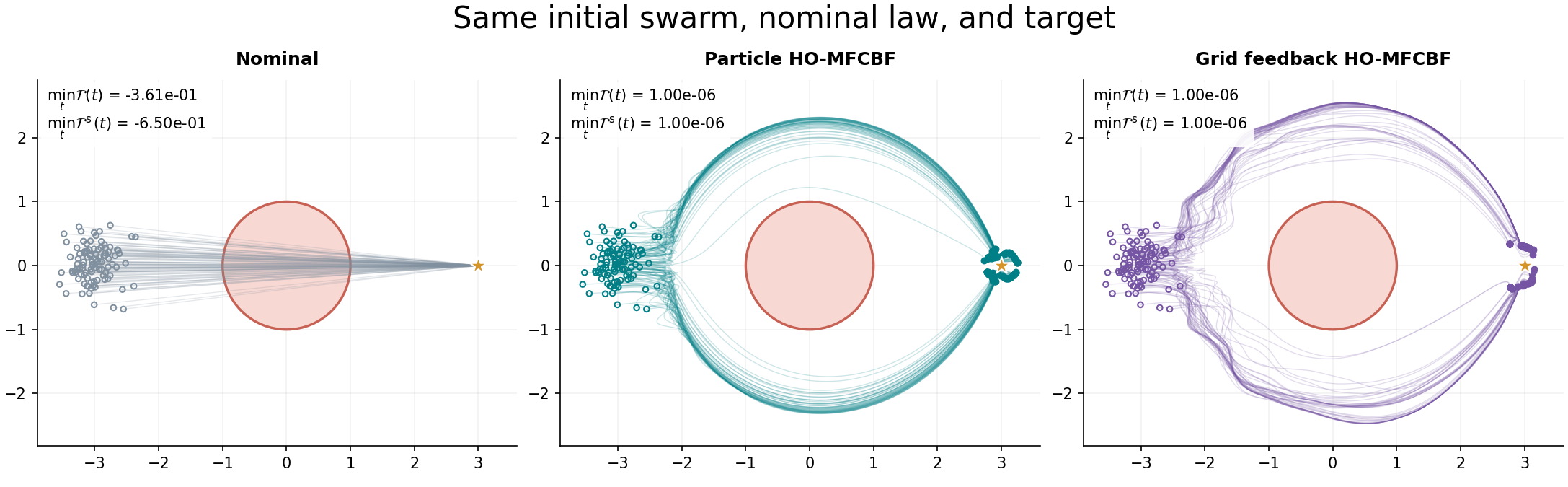}
    \caption{Obstacle avoidance: nominal, particle HO-MFCBF, and grid-feedback HO-MFCBF trajectories. The shaded disk is the obstacle and the star is the target. Annotations report the minimum empirical barrier values.}
    \label{fig:avoidance}
\end{figure*}

\begin{figure*}[!t]
    \centering
    \includegraphics[width=0.96\textwidth]{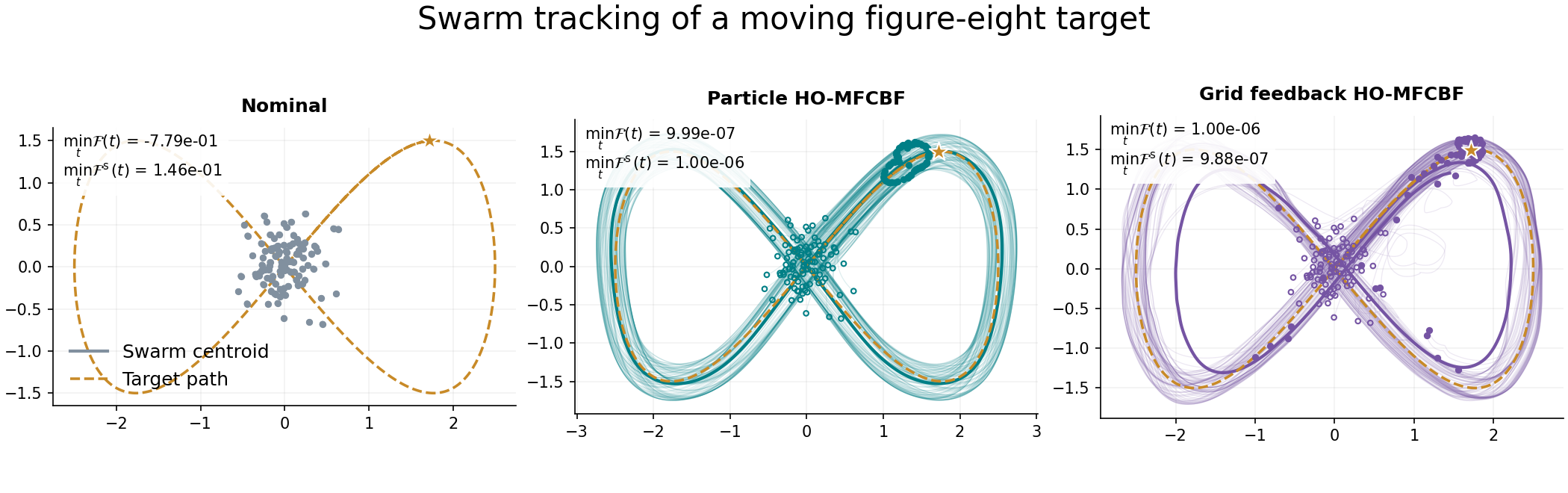}
    \caption{Moving-reference tracking with the same three controllers.
    The dashed curve is the target-center path and the star marks its final
    position. The nominal swarm remains stationary, whereas both filtered
    swarms follow the reference.}
    \label{fig:tracking}
\end{figure*}

\section{Numerical experiments}\label{sec:experiments}\label{sec:numerics}

We consider a swarm with a nominal control $u^{\mathrm{nom}}$. At each time, we find the closest safe control by solving
\begin{equation}\label{eq:continuous_projection}
    \underset{\substack{u\in\mathcal K_{\mathrm{CBF}}(t,\rho_t)\\
                       u\in\mathcal K^{\mathrm s}_{\mathrm{CBF}}(t,\rho_t)}}{\operatorname{argmin}}
    \frac12\big\|u-u^{\mathrm{nom}}(t,\cdot)\big\|^2,
\end{equation}
where $\mathcal K_{\mathrm{CBF}}$ and $\mathcal K^{\mathrm s}_{\mathrm{CBF}}$ are the safe-control sets in~\eqref{eq:control_general} for $\mathcal F$ and $\mathcal F^{\mathrm s}$, respectively. The norm can be chosen to specify how control modifications are measured.

We consider two discretizations of~\eqref{eq:continuous_projection}, where one chooses an acceleration for each particle, and the other constructs a feedback field by interpolation on a state-space grid.

\subsection{Discretization and safety filtering}

For simplicity, we choose $\alpha_i(s)=\lambda_i s$ and $\alpha_i^{\mathrm s}(s)=\mu_i s$, with $\lambda_i,\mu_i>0$, $i=1,2$. The drift contributions to the safe-control inequalities are
\begin{equation}\label{eq:betas}
\begin{aligned}
    \beta&=\lambda_1\lambda_2(\epsilon-R)-(\lambda_1+\lambda_2)\mathcal L_fR-\mathcal L_f^2R,\\
    \beta_{\mathrm s}&=\mu_1\mu_2(\epsilon_{\mathrm s}-S)-(\mu_1+\mu_2)\mathcal L_fS-\mathcal L_f^2S.
\end{aligned}
\end{equation}
We approximate the densities by empirical measures
\begin{equation}\label{eq:empirical}
    \rho^N=\frac1N\sum_{i=1}^N\delta_{x_i},\qquad
    \rho^{*,J}=\frac1J\sum_{a=1}^J\delta_{y_a},
\end{equation}
where $x_i=(q_i,v_i)$ and $y_a=(p_a,w_a)$. The kernel integrals in Proposition~\ref{prop:DI} become finite sums over swarm--reference pairs or pairs of swarm particles. These sums compute the drift derivatives in~\eqref{eq:betas}; likewise,~\eqref{eq:bc} gives $b_i=b(t,q_i)$ and $c_i=c(t,\rho^N,q_i)$. All coefficients below are evaluated at the current empirical densities.

\textit{Particle controls.} For this case, we use the $L^2(\rho_t)$ norm, approximated by the empirical swarm distribution. Thus, the safe accelerations solve
\begin{equation}\label{eq:particle_QP}
\begin{aligned}
    \min_{u_1,\ldots,u_N}\quad&\frac1{2N}\sum_{i=1}^N|u_i-u_i^{\mathrm{nom}}|^2\\
    \text{subject to}\quad&\frac\sigma N\sum_{i=1}^N b_i\cdot u_i\leq\sigma\beta,\\
    &\frac1N\sum_{i=1}^N c_i\cdot u_i\leq\beta_{\mathrm s}.
\end{aligned}
\end{equation}

\textit{Grid feedback.} For this case, we choose $L$ state-space nodes $\xi_a$ and interpolation functions $\phi_a$ satisfying $\phi_a(\xi_b)=\delta_{ab}$ and $\sum\limits_{a=1}^L\phi_a=1$. We then represent the control by
\begin{equation}\label{eq:grid_control}
    u(x)=\sum_{a=1}^L\phi_a(x)U_a,
\end{equation}
so the unknown parameters are now the coefficients $(U_a)$.

Furthermore, we use the $L^2$ norm with respect to the Lebesgue measure in~\eqref{eq:continuous_projection}, approximated by trapezoidal quadrature. The density is still represented by particles. Substituting the interpolated control into the same empirical constraints gives
\begin{equation}\label{eq:grid_QP}
\begin{aligned}
    \min_{U_1,\ldots,U_L}\quad&\frac12\sum_{a=1}^L\omega_a|U_a-U_a^{\mathrm{nom}}|^2\\
    \text{subject to}\quad&\sum_{a=1}^L \left[\frac\sigma N\sum_{i=1}^N b_i \phi_a(x_i)\right]U_a\leq\sigma\beta,\\
    &\sum_{a=1}^L \left[\frac1N\sum_{i=1}^N c_i \phi_a(x_i)\right] U_a\leq\beta_{\mathrm s},
\end{aligned}
\end{equation}
with the trapezoidal weights $\omega_a$. Interpolation restricts the control fields, and the nodal objective differs from the particle objective, so the two projections can produce different trajectories.

We impose no additional constraints on the accelerations. With the two affine barrier inequalities, both least-squares projections admit analytic solutions whenever feasible. Adding control bounds or other affine input constraints yields a quadratic program.

\subsection{Swarm avoidance and tracking}
We use the Gaussian position kernels
\begin{equation}\label{eq:Gaussian}
    \kappa(q,p)=e^{-|q-p|^2/(2\ell^2)},\qquad
    \kappa_{\mathrm s}(q,p)=e^{-|q-p|^2/(2\ell_{\mathrm s}^2)},
\end{equation}
where $\ell,\ell_{\mathrm s}>0$ set the proximity scales.

We simulate $N=100$ planar double integrators over $[0,24]$, comparing
nominal control with the particle and grid projections \eqref{eq:particle_QP} and~\eqref{eq:grid_QP}. Each comparison uses identical initial samples, zero velocities, and Gaussian position perturbations with covariance $0.25^2I$. Both filtered controllers enforce the cross- and self-correlation constraints separately, with  $\lambda_1=\lambda_2=\mu_1=\mu_2=2$,
$\ell_{\mathrm s}=0.18$, and $\epsilon_{\mathrm s}=0.35$. The feedback grid has $20\times20\times5\times5$ nodes in $(q,v)$, tensor-product piecewise-linear basis functions, and normalized trapezoidal weights. Its domain is $[-6,6]\times[-5,5]^3$ for avoidance and $[-4,4]\times[-3,3]^3$ for tracking. Fourth-order Runge--Kutta reevaluates the control at every stage, using time steps $0.02$ and $0.005$, respectively. 
To accommodate numerical integration and round-off errors, both experiments enforce the HO-MFCBF conditions on $\mathcal F-\delta$ and $\mathcal F^{\mathrm s}-\delta$, with $\delta=10^{-6}$, while reporting the original barrier values.

\textit{Obstacle avoidance.}
The swarm starts near $(-3,0)$ and moves toward $q_{\mathrm{tar}}=(3,0)$
under $u^{\mathrm{nom}}=0.7(q_{\mathrm{tar}}-q)-1.8v$. The reference is the uniform unit disk centered at the origin, represented
by $256$ equal-area quadrature points. We use $\sigma=1$, $\ell=0.45$, and $\epsilon=10^{-3}$. Figure~\ref{fig:avoidance} shows the nominal swarm crossing the obstacle and contracting toward the target. Both filtered runs retain positive clearance at recorded times, with the minimum distances to the disk boundary being 1e-6 for both particle and grid control.
Figure~\ref{fig:avoidance} also shows the self-correlation cap to be satisfied, which prevents complete collapse at the target.

\textit{Swarm tracking.}
The reference is a uniform disk of radius $0.25$, represented by $64$ quadrature points, with center
\begin{equation*}
\begin{aligned}
    c(t)&=(2.5\sin\theta(t),\,1.5\sin(2\theta(t))),\\
    \theta(t)&=0.32\big[t-2(1-e^{-t/2})\big].
\end{aligned}
\end{equation*}
The swarm starts near $c(0)=0$.
With $\sigma=-1$, $\ell=0.65$, and $\epsilon=0.78$, the cross-correlation
constraint maintains proximity to the moving reference. 
The nominal law $u^{\mathrm{nom}}=-1.8v$ leaves the initially resting swarm stationary, so its tracking barrier becomes negative. Both filtered swarms follow the figure-eight path (Fig.~\ref{fig:tracking}); grid feedback produces greater spatial spread, consistent with its different admissible fields and objective.

Their filtered minima are positive in avoidance and exceed 9e-7 in tracking, which ensures the swarm remains a certain distance from the target. The initial  conditions~\eqref{eq:initial_general} are satisfied, and 
the trajectories illustrate population-level constraint enforcement, without establishing continuous-time safety between samples or a minimum separation for every agent pair.

\section{Conclusion}\label{sec:conclusions}

We developed higher-order MF-CBFs to enforce safety constraints when control does not appear in the first derivative of mean-field barrier functionals. Our analytic framework enables efficient computation of higher-order derivatives and preserves structure in cross- and self-correlation functionals. We validated the method on double-integrator swarm tasks with $100$ agents. Code for these experiments can be found at \url{https://github.com/mines-opt-ml/higher-order-mf-cbf}. Future work includes analyzing global feasibility and fast computational methods for safe controls.

\section*{Acknowledgements}

OpenAI Codex and ChatGPT supported text editing, initial code development, and literature search. The mathematical results and final content are due to the authors.

\bibliographystyle{ieeetr}
\bibliography{refs}
\end{document}